\newcommand{\mathsym}[1]{{}}
\newcommand{\unicode}[1]{{}}
\newtheorem{theorem}{Theorem}[]
\theoremstyle{definition}
\newtheorem{defi}[]{Definition}[]
\newtheorem{remark}[]{Remark}[]
\numberwithin{equation}{section}
\begin{document}

\title[On the non-Paschian ...]{On the non-Paschian ordered planes}
\author[\'A. G.Horv\'ath]{\'Akos G.Horv\'ath}
\date{Febr. 2016}

\address{\'A. G.Horv\'ath, Dept. of Geometry, Budapest University of Technology,
Egry J\'ozsef u. 1., Budapest, Hungary, 1111}
\email{ghorvath@math.bme.hu}

\subjclass[2010]{03C30, 51G05, 97E40}
\keywords{betweenness, incidence, ordering, Pasch axiom}

\begin{abstract}
We give a non-Paschian plane based on the property of betweenness which cannot be derived from an ordering of the points of a line. In this model there is no possibility to define the congruence of segments but we can define angle, triangle and angle measure, respectively. With respect to our definitions the plane has an elliptic character, meaning that the sum of the angles of a triangle is greater than $\pi$.
\end{abstract}

\maketitle

\section*{Introduction}

On the Cartesian plane there are two methods to define an ordered plane on which the Pasch axiom is false. If we leave out a point from an Euclidean plane we can give such a plane as a simple example. Using this method we automatically omit some further axioms from the axiom system of the Euclidean plane. A line through the missing point cannot fulfil the first and third axioms of congruency (namely, that every segment can be "laid off" on a given side of a given point on a given line in at least one way, and the axiom of the addition of congruent segments), respectively. In addition, the Cantor axiom (and consequently the Dedekind axiom) is also false on this plane.

Another method has been shown by Szczerba (see \cite{szczerba} and \cite{adler}) who defined the property of "betweenness" based on an exotic ordering of the points on a line. Applying this method on the Cartesian plane we can define a plane in which -- except the Pasch axiom -- all axioms of the Euclidean plane hold.  We describe this method briefly. Let $f:\mathbb{R}\rightarrow \mathbb{R}$ be an additive onto function and $0<f(1)$. Let $x<^\star y$ if $f(x)<f(y)$. Then $(\mathbb{R},+,\cdot,<^\star)$ is a semi-ordered field, means that it is an ordered additive group but it is not hold necessarily that if $0<^\star x$ and $0<^\star y$ then $0<^\star xy$. Szczerba proved that the Cartesian plane over $(\mathbb{R},+,\cdot,<^\star)$ satisfies the axioms of plane geometry (with the full second-order continuity axiom) except the Pasch axiom. Moreover if the ordering is not the usual one (based on an $\mathbb{R}$-linear mapping $f$) then the corresponding Cartesian plane  does not satisfy the Pasch axiom. To define such a non-usual ordering we have to define an additive non-linear function. Szczerba's method based on the Hammel basis of the reals over the rationals and hence it uses the axiom of choice. Addler certified in \cite{adler} that the axiom of choice plays an essential role in Szczerba's proof, namely all models satisfying the axioms of the Euclidean plane except the Pasch axiom are isomorphic to the model constructed by Szczerba. Addler noted also that the construction of additive non-linear functions implies the existence of non-measurable sets in the sense of Lebesgue (such a function is not Lebesgue measurable). Hence if we change the axiom of choice to the axiom of determinacy then all additive functions are linear ones, which means that all models satisfying the remaining axioms of the plane geometry fulfil the Pasch axiom too.

Our purpose is to define a non-Paschian plane based on such well-defined property of betweenness which cannot be derived from an ordering of the points lying on a line.  In the new model it is not possible to define the congruence of segments but we can define angle, triangle and angle measure, respectively. With respect to our definitions the plane has an elliptic character, meaning that the sum of the angles of a triangle is greater than $\pi$. It is interesting that the continuity axioms hold as well. Finally, contrary to the plane of Szczerba our construction is independent from the axiom of choice, namely it remains a non-Paschian plane even if we substitute the axiom of choice with the axiom of determinateness.

There are several books on the foundations of geometry. We mention here some of them as references for the interested reader. We propose the books \cite{bonola}, \cite{coxeter}, \cite{hilbert}, \cite{berger}, \cite{gho} to investigate non-Euclidean geometries in general and the papers  \cite{gho 1}, \cite{gho 2}, \cite{gho 3} for a deeper investigation of the hyperbolic plane.

\section*{A "betweenness"-based ordered plane}

In this paper the concept of \emph{plane} has a double meaning, on one hand, it is a so-called \emph{set of points} and on the other hand, it is a set of \emph{lines}, thus we denote by $\mathcal{P}$ and $\mathcal{L}$, respectively. We would like to allow on the plane the binary notion of \emph{incidence} to points and lines; and the ternary notion of \emph{betweenness} defined on the set of points which are lying on the same line. We use Hilbert's axiom system (see \cite{hilbert}) which is the most popular and used one in the fundamentals of geometry.

Let us denote the binary relation of {\em incidence} of the point $A$ and the line $a$ by $AIa$ where $A\in \mathcal{P}$ and $a\in \mathcal{L}$. The axioms of incidence are formulated as follows:

\begin{enumerate}
 \item[I1.] For any two points $A$ and $B$ there exists a line $a$ which fulfil $AIa$ and $BIa$.
 \item[I2.] For any two distinct points there exists only one line that contains both; consequently, if $AIa$ and $BIa$ and also if $AIb$ and $BIb$ are fulfilled then $a\equiv b$. (Hence the line $a$ determined uniquely by $A$ and $B$, and it can be denoted by $AB$.)
 \item[I3.] There exist at least two points on a line; for all $a\in \mathcal{L}$ there are $A,B\in \mathcal{P}$ such that $AIa$ and $BIb$. There exist at least three points which do not lie on the same line.
 \end{enumerate}

For three points $A,B,C$ the relation that the point \emph{$B$ is between the points $A$ and $C$} we denote by $(ABC)$.
The axioms of betweenness are
\begin{enumerate}
\item[B1.] If $(ABC)$, then $(CBA)$, and there is a line $e$ which contains the points $A,B,C$.
 \item[B2.] If $A$ and $C$ are two distinct points, there is at least one point $B\in AC$ for which $(ACB)$.
 \item[B3.] Of any three points on a line there exists only one which lies between the other two.
 \item[B4.](Pasch's Axiom): Let $A$, $B$, $C$ be three distinct points not collinear and let $a$ be a line not passing through any of the points $A$, $B$, $C$. If there exists a point $D$ lying on the line $a$ for which $(ADB)$ then either there is a point $E$ such that $EIa$ and $(BEC)$ or there exists a point $F$ such that $FIa$ and $(AFC)$.
\end{enumerate}

We note that Hilbert's axiom system has a weaker form, where the existence part of $\mathrm{B}2$ ("there is at least one") is proven by a theorem. As we can see from these axioms (using Tarski's apellations of \cite{tarski}) we have a Lower 2-dimensional axiom ($\mathrm{I}3$), but we do not use Upper 2-dimensional axiom. In fact, the Pasch's Axiom ($\mathrm{B}4$) in the present form restrict the value of the usual concept of dimension to $n=2$ and so a natural definition arise for ordered plane of dimension $2$ if we assume the above seven axioms. If we would like to consider the dimension open then we have to modify the strong axiom $\mathrm{B}4$ on such a way that it will be effect only on "2-dimensional configurations". This motivates the following modification.

\begin{enumerate}
\item[$\mathrm{B}4^\star$.]: Let $A$, $B$, $C$ be three distinct points not collinear and let $a$ be a line not passing through any of the points $A$, $B$, $C$. If there exists a point $D$ lying on the line $a$ for which $(ADB)$ and $a$ intersects the remaining two lines $(BC)$ and $(AC)$ in the points $E$ and $F$, respectively then either $(BEC)$ or $(AFC)$.
\end{enumerate}

Clearly, $\mathrm{B}4^\star$ is a weakening of the requirement of $\mathrm{B}4$. For example, we do not investigate its validity on lines which intersects only two lines from the fixed three ones. Hence we exclude those counterexamples on Pasch's axiom in what a possible point of intersection has been dropped from the plane. We also disregard from those lines which in a case of the extension of the axiom system to the axiom system of a higher dimensional space cannot be considered as a line of the plane of the points $A,B,C$.

On the base of this weakening it is possible that we cannot fit together the betweenness property of three lines, because there is no a fourth one which intersects all of them in distinct points. (The original strong form of Pasch's axiom excludes this possibility.) Hence we introduce a fourth axiom of incidence which is in the case when the original Pasch's axiom holds is a consequence of the axioms.

\begin{enumerate}
\item[I4.]: For any three pairwise intersecting lines $a,b,c$ there is a fourth line $d$ and three distinct points $A,B,C$ such that the respective incidences $AId$, $AIa$, $BId$, $BIb$ and $CId$, $CIc$ are hold.
\end{enumerate}

Our definition of ordered plane is:

\begin{defi}\label{def:orderedplane} The plane is \emph{ordered} if it satisfies the axioms $\mathrm{I}1-\mathrm{I}4$, $\mathrm{B}1-\mathrm{B}3$ and $\mathrm{B}4^\star$, respectively. The plane is \emph{non-Paschian ordered plane} if it is an ordered plane without $\mathrm{B}4^\star$.
\end{defi}

\begin{remark} 
In the paper \cite{kreuzer} we can find the characterization of the so-called halfordered planes. Among these types one gives a non-trivial example for non-Paschian ordered plane. In fact, the halfordered plane with the property "from three distinct collinear points are exactly two between the others" characterized as an affine plane with order $5$ with the following betweenness property: If $A,B$ two points on a line and the other points are in the form $C=A+\lambda(B-A)$ where $\lambda\in \mathbb{Z}_5$ then $(BAC)$ holds if and only if $\lambda=4$ (a square in $\mathbb{Z}_5$). If one changes $(ABC)$ with $\neg (ABC)$ one gets an ordered plane with $\neg\mathrm{B}4$. But this is a finite model without any nice metric property as e.g. the existence of the angle of lines with a corresponding angular measure. The purpose of this paper to give another model (with interesting geometric properties) such that which does not arise from any usual affine structure.
\end{remark}

Note that in a classical $2$-dimensional absolute plane (holding axioms $\mathrm{I}1-\mathrm{I}3$, $\mathrm{B}1-\mathrm{B}4$) the concept of segment can be defined, as the set of points between the two end-point of the segment, and the following five statements are valid:

\begin{itemize}
\item[T1.] Every segment has at least one point. (Which is according to our definition not an endpoint of the segment.)
\item[T2.] The so-called degenerated case of the Pasch axiom holds, which says that if a line intersects a segment which is one of the three segments determined by three collinear points, then it intersects at least one of the other two segments.
\item[T3.] For four collinear points $A,B,C,D$ for which $(ABC)$ and $(ACD)$ hold, we also have $(ABD)$. (With other words all points of a segment lying on a segment which contains the endpoints of the first one). Consequently each segment and each line has infinitely many points.
\item[T4.] In a line, a point $O$ determines two (closed) half-lines with the common origin $O$ which is the only common point of them.
\item[T5.] A line $l$ determines two (closed) half-planes with the intersection line $l$ which is the common boundary of the two half-planes.
\end{itemize}

Given a line $l$ and a point $O$ on it. The points $A$ and $B$ on $l$ are locate on \emph{opposite sides} of $O$ if $(AOB)$. The points $A$ and $B$ are on the \emph{same side} of $O$ if $(OAB)$ or $(OBA)$. An (open) \emph{half-line with the origin $O$} contains all the points which are on the same side of $O$. [T4] says that the property "same-side" is an equivalence relation with two classes. Analogously we can define the (open) half-plane as a class of the equivalence relation based on the concepts of "opposite sides" and "same side" of $l$. [T5] requires that for a line $l$ the corresponding equivalence relation has exactly two equivalence classes.

The model of our non-Paschian ordered plane is based on the known model of a finite line, which is the so-called \emph{line-model of five-points}. In this model a betweenness relation is defined satisfying the axioms $\mathrm{B}1,\mathrm{B}2,\mathrm{B}3$. (Thus the  model shows that the axioms $\mathrm{B}1,\mathrm{B}2,\mathrm{B}3$ do not imply that a line has infinitely many points (see in \cite{moussong} or \cite{gho}). Using this model we can construct a non-Paschian ordered plane having infinitely many lines with five points and five lines with infinitely many points.

Let $A,B,C,D$ and $E$ be the vertices of a regular pentagon. Clearly, all triangles determined by three vertices of a regular pentagon are isosceles triangles but not equilateral ones. Hence in each triangle  we can chose one of the three vertices (in a natural way) which is in between the other two points: the common vertex of the two legs of the triangle. (So if the legs are $AB$ and $BC$ we define the betweenness such that $B$ is between the points $A$ and $C$.)
The properties of a five-points line are the following:
\begin{itemize}
\item It is easy to see that such line fulfils the axioms  $\mathrm{B}1,\mathrm{B}2$ and $\mathrm{B}3$.
\item Clearly, the closed segments of the line are the point sets with cardinality three. The two endpoints of a segment are the vertices on the base of the corresponding isosceles triangle. We have two types of segments. One of them called \emph{small segment}, this is formed by two consecutive edges (as legs) of the pentagon. The segment is a \emph{large segment}, if its base is an edge of the pentagon.
\item Theorem [T1] is true but [T2] and [T3] are false, respectively. In fact, from $(ABC)$ follows that $B$ is the only point of the segment $AC$, and from $(CAD)$ follows that the only point of the segment $CD$ is $A$. Since from $(ACD)$ follows that the only point of the segment $AD$ is $C$, thus a line (of an embedding plane) distinct from $ABCDE$ can intersect exactly one edge of the degenerated triangle $ACD$. Moreover $(ABC)$ and $(ACE)$ does not implies $(ABE)$ but also $(EAB)$ holds.
\item Consider the true relations $(ABC)$, $(ACE)$ and $(EAB)$. Assume that these relations arise from an ordering of the points $A,B$ and $C$, this ordering is either $A<B<C$ or $A>B>C$. If we assume the first possibility then $A<B<C$ implies $A<C$ and hence $A<E$ holds, too. Then the third betweenness relation implies that $A>B$ contradicting with our first assumption. The second possibility $A<B<C$ leads to the same contradiction, meaning that there is no ordering of the points which can imply the relation of betweenness.
\item The linear axiom of congruence holds for the equivalence relation on this line in which two large (or two small) segments are congruent to each other, respectively. We can also define the union of two segments. Two segments with one common endpoint have the union as the segment corresponding to the free ends of the given ones. The lengths of the segments can be prescribed as the corresponding elements of the finite field $GF(3)$. ($GF(3)$ is the only finite field with three elements which isomorphic to the ring of integers modulo $3$).
\item Observe that the continuity axiom of Archimedes, Cantor and Dedekind hold, respectively. In fact, Cantor's axiom is true because there is no two segments containing each other. Dedekind's axiom is true, because there are no two sets $\mathcal{A}$, $\mathcal{B}$ in the line forming a Dedekind cut. Finally, the axiom of Archimedes follows from the fact that every large segment is a subset of the union of two small ones; and every small segment is a subset of the union of two big ones.
\end{itemize}

\begin{figure}
\centering
\includegraphics{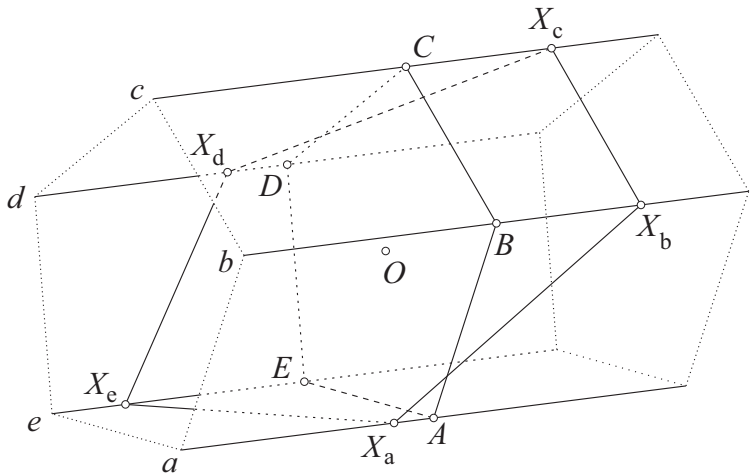}\\
\caption{Two non-intersecting lines}\label{model}
 \end{figure}

Let now $a,b,c,d,e$ be five parallel lines orthogonal to the plane of the points $A,B,C,D,E$, and passing through the points $A,B,C,D,E$, respectively (see Fig.\ref{model}). Denote the center of the regular pentagon by $O$ and consider the 2-planes of the Euclidean space through this point. Let the set of points $\mathcal{P}$ be the collection of the points of the $3$-space which belong to the lines $a,b,c,d,e$, and the set of lines $\mathcal{L}$ is the union of $a,b,c,d,e$ with the five-points sets ${X_a,X_b,X_c,X_d,X_e}$ obtained by the intersection of the above mentioned $2$-planes and the lines $a,b,c,d,e$.
The concept of incidence is the same as the Euclidean incidence. It is clear that $\mathrm{I}1-\mathrm{I}3$ are  fulfilled. Define the property of betweenness on the following way. If three points $X$,$Y$,$Z$ lie on one of the lines $a,b,c,d$ or $e$ then let us denote by $(XYZ)$ the fact that $Y$ lies between the points $X$ and $Z$ with respect to the Euclidean concept of betweenness. In the  case if three points $X_j\in j$, $X_k\in k$ and $X_l\in l$ are such that $\{j,k,l\}\subset \{a,b,c,d,e\}$ - lying on the same plane - we denote by $(X_j,X_k,X_l)$ the fact that $(JKL)$ holds with respect to the concept of betweenness in the five-points model of the points $\{A,B,C,D,E\}$.

\begin{defi}\label{def:angle}
Two planes which intersect the model in the points of two lines $l_1,l_2\in\mathcal{L}$ divide the embedding space into four parts where the opposite parts are congruent to each other, respectively. The common line $m$ of the two planes contains at most one point of the model. In this  case let us denote by $P=l_1\cap l_2\in a$ the common  point of the lines. We can assign to these lines $l_1,l_2$ two \emph{angles}. They contain all points of the model  which belong to the union of the opposite wedge of the space, determined by the given planes, respectively. The legs of these angles are the lines $l_1$ and $l_2$, the common vertex of the angles is $P$. The \emph{angle measures} of the domains are the Euclidean angle measures of dihedral angle. It implies, that the sum the two non-congruent angles is equal to $\pi$.
\end{defi}

\begin{remark} With respect to Def. \ref{def:angle} we have orthogonality in the model. For instance the lines $a$ and $ABCDE$ are orthogonal to each other. We have a natural concept of triangle, three pairwise intersecting lines $l_1$, $l_2$ and $l_3$ (which three have no point in common) determine three points $L_i=l_j\cap l_k$ where $i,j,k\in\{1,2,3\}$ distinct indices. The vertex set of the triangle is $\{L_1,L_2,L_3\}$, the open sides of the triangle are the three segments $\overline{L_iL_j}$, and the triangle domain is the intersection of three angles corresponding to the pairs of lines $l_i,l_j$, respectively (see Fig. \ref{triangle}). The domain of the open triangle depends on the choice of the open angle domains. For each pair of lines we have two possibilities to define it. In Fig. \ref{triangle} $l_1$,$l_2$ and $l_3$ contain the points $X_i$,$Y_i$ and $Z_i$, respectively. The vertices of the triangle are $P=X_a=Y_a$, $Q=Y_b=Z_b$ and $R=X_c=Z_c$, the closed sides are $\mathrm{cl}(\overline{PQ})=\{Y_a,Y_b,Y_d\}$, $\mathrm{cl}(\overline{QR})=\{Z_b,Z_c,Z_e\}$ and $\mathrm{cl}(\overline{PR})=\{X_a,X_c,X_b\}$, respectively. We choose  the angle domain of $l_1,l_2$ with the inner points $\mathrm{int}(QPR\angle):=\overline{X_bY_b}\cup \overline{X_cY_c}\cup \overline{X_dY_d}\cup \overline{X_eY_e}$; for $l_2,l_3$ the angle domain $\mathrm{int}(PQR\angle):=b\setminus {Q}\cup (a\setminus \mathrm{cl} \{\overline{Y_aZ_a}\})\cup (c \setminus \mathrm{cl} \{\overline{Y_cZ_c}\})\cup (d \setminus \mathrm{cl} \{\overline{Y_dZ_d}\})\cup (e \setminus \mathrm{cl} \{\overline{Y_eZ_e}\}) $ and for $l_1,l_3$ let the angle domain be $\mathrm{int}(QRP\angle):=\overline{X_aZ_a}\cup \overline{X_bZ_b}\cup \overline{X_dZ_d}\cup \overline{X_eZ_e}$. The interior of the triangle contains the intersection of these angle domains hence $\mathrm{int}(PQR_\triangle)=\overline{Z_bX_b}=\overline{QX_b}$.
\end{remark}

\begin{figure}
  \centering
    \includegraphics{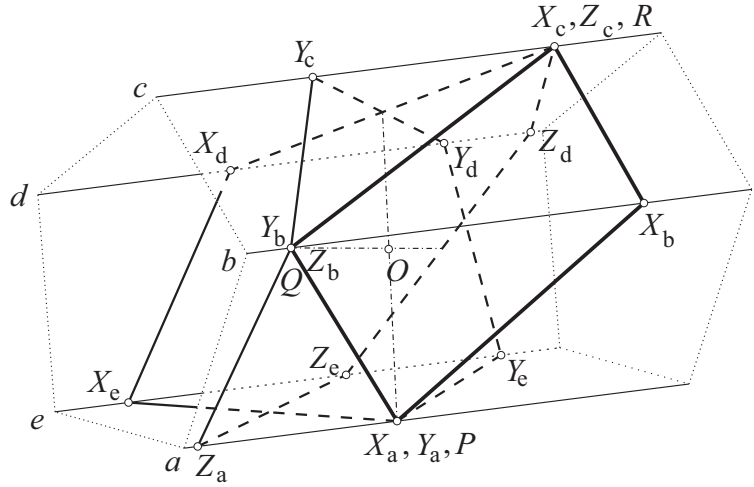}\\
  \caption{Angles and triangles}\label{triangle}
\end{figure}

\begin{theorem}
This construction defines an elementary model of a  non-Paschian ordered plane in the sense of Definition \ref{def:orderedplane}. The three axioms of continuity are true in the corresponding plane. In addition, the sum of the angle measures of the triangles are greater then $\pi$. Moreover there is no universal description of parallelism, there are non-intersecting lines, there is such a pair of points and lines for which the Euclidean axiom of parallels  holds, and there is such a pair of points and lines for which the negation of the Euclidean axiom of parallels is true.
\end{theorem}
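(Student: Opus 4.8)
The plan is to fix coordinates and then verify the clauses one at a time. I place the regular pentagon $A,\dots,E$ in the horizontal plane $z=0$ about $O$, with the five vertical lines $a,\dots,e$ parallel to the $z$-axis. Every non-vertical plane through $O$ has the form $z=\alpha x+\beta y$ and meets the vertical line through a vertex $V$ at height $\langle(\alpha,\beta),V\rangle$; thus the five-point lines are parametrized by $(\alpha,\beta)\in\mathbb{R}^2$, while a vertical line $a$ corresponds to the vertical plane through $O$ and $A$. Two facts drive the incidence analysis: $O$ together with any two vertices is never collinear, and the line through $O$ and a vertex contains no other vertex. I would also record at the outset that betweenness on a five-point line is purely combinatorial, depending only on which verticals carry the three points (via the pentagon model), not on $(\alpha,\beta)$ nor on the heights; this is what makes the later bookkeeping tractable.

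First I would dispatch incidence. Two points on a common vertical determine it uniquely, since no five-point line meets a vertical twice; two points on different verticals determine the unique plane through them and $O$, which is non-vertical by the first fact, hence a unique five-point line. Because distinct verticals are disjoint, at most one vertical can occur among three pairwise intersecting lines, leaving two cases for $\mathrm{I}4$. For three five-point lines some vertical meets all three in distinct points (each pair of distinct planes agrees on at most one vertical, so at most three of the five are unusable), and that vertical is the required transversal. For one vertical $a$ and two five-point lines $l_2,l_3$, I solve two linear conditions in $(\alpha,\beta)$ to get a five-point line meeting $l_2$ and $l_3$ on two chosen verticals other than $a$; it meets $a$ automatically. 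The betweenness axioms $\mathrm{B}1$--$\mathrm{B}3$ then reduce, on each line, to the Euclidean line or to the five-point model, both already handled in the excerpt.

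The heart is the failure of $\mathrm{B}4^\star$. I would take a triangle whose vertices $P,Q,R$ lie on the verticals through $C,D,B$, and let the transversal be the vertical line through $A$, which meets all three sides (a vertical meets every five-point line) and avoids $P,Q,R$. Its intersection $M$ with side $PQ$ satisfies $(PMQ)$ because $A$ is equidistant from $C$ and $D$ (apex of the isosceles triangle), whereas the intersections $N$ with $QR$ and $K$ with $PR$ fail $(QNR)$ and $(PKR)$ since $A$ is equidistant from neither $D,B$ nor $C,B$; this is exactly a Pasch violation. Realizability reduces to choosing three planes through $O$ whose pairwise intersection lines project to $OC,OD,OB$ and which are non-concurrent, a solvable linear system. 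The three continuity axioms are per-line statements, so they follow from the real line on the verticals and from the five-point analysis already given on the five-point lines.

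For the metric claims I would pass to the unit sphere about $O$: every model line corresponds to a plane through $O$, hence to a great circle, and the angle measure of Definition~\ref{def:angle} is exactly the angle between the corresponding great circles. A triangle thus yields three great circles which, being non-concurrent (their pairwise intersection lines project to three distinct directions $OV$), are in general position and cut out genuine spherical triangles; by the spherical excess every such triangle has angle sum exceeding $\pi$, which is the elliptic character. Parallelism I would show to be non-uniform: through a point on vertical $a$ the only line missing another vertical $b$ is $a$ itself (every five-point line meets every vertical), so the Euclidean parallel postulate holds there, while through a generic point the five-point lines form a one-parameter family of which only finitely many (one per vertex) meet a given five-point line, so infinitely many parallels exist; and two distinct verticals never meet. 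The step I expect to cost the most care is this angle computation: one must match the combinatorial choice of angle domains (wedges) in Definition~\ref{def:angle} with an honest spherical triangle and check that each admissible choice selects the interior angles of one of the eight triangles cut by the three great circles, so that the excess inequality applies in every case.
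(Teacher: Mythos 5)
Your proposal is correct and follows essentially the same route as the paper's proof: incidence via planes through $O$, the case analysis on how many verticals occur among three pairwise intersecting lines for $\mathrm{I}4$, a per-line verification of $\mathrm{B}1$--$\mathrm{B}3$ and of continuity, a Pasch counterexample built from a triangle whose vertices lie on three verticals with a fourth vertical as transversal (your configuration on the verticals through $B,C,D$ with transversal through $A$ is the paper's configuration on $a,b,c$ with transversal $d$, up to the pentagon's symmetry), the unit sphere about $O$ for the angle-sum claim, and the same two cases for parallelism. Your version merely adds explicit coordinates and a slightly more careful bookkeeping of the transversal count in $\mathrm{I}4$ and of the spherical-triangle identification, which the paper asserts more briefly.
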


\begin{proof}
The axioms of incidence $\mathrm{I}1$, $\mathrm{I}2$ and  $\mathrm{I}3$ are hold, because three non-collinear points of the Euclidean space uniquely determine a plane of the space. To prove $\mathrm{I}4$ observe that the lines $a,b,c,d,e$ intersect all line of five-points, respectively. This implies that three lines
of five-points can be intersected in three distinct points by at least two from the lines $a,b,c,d,e$. In the remaining cases, when among the examined lines we have one or more from the lines $a,b,c,d,e$ we also can guarantee easily common transversalis. In fact, if $p$ and $q$ are two lines of five-points and the third is $a$ then the line $r$ of five-points determined by $P_b=p\cap b$ and $Q_c=q\cap c$ intersects $a$ in a third point $R_a=r\cap a$. If $p$,$a$ and $b$ gives the examined triplet then arbitrary line of five-points through $p_C=p\cap c$ is usable as a transversalis and in the last case when our choice is the three lines $a$,$b$ and $c$ then we can consider any line of five-points to this purpose.

We saw that the axioms B1,B2 and B3 are true on a line of five-points and the remaining lines $a,b,c,d,e$ have Euclidean ordering. In order to prove that $\mathrm{B}4*$ is false we consider the non-collinear triplet $P,Q,R$ and the line $d$. Since the only point of the segment $\overline{PQ}$ is $Y_d\in d$, the line $d$ and its point $Y_d\in \overline{PQ}$ fulfil the assumption of the axiom $\mathrm{B}4^\star$. On the other hand the segments $\overline{QR}=\{Z_e\}\in e$ and $\overline{RP}= \{X_b\}\in b$ are disjoint to $d$ which is a counterexample to $\mathrm{B}4^\star$.

Observe that drawing a unit sphere around the point $O$ we can realize the angle measures of a triangle of the model as the spherical angles of a spherical triangle on the sphere. Hence the sum of the angles of a triangle of the model is equal to the sum of the angles of a spherical triangle which is greater than $\pi$.

Finally, the lines $a$ and $b$ are non-intersecting ones, for the line $a$ through the point $X_b$ we have only one line which does not intersect the line $a$, it is the line $b$. If $l$ is a line of five-points and a point $X$ does not lie on $l$ then there are only finitely many lines through $X$ intersecting $l$, hence the infinitely many other lines through $X$ do not intersect $l$.

\end{proof}

As a closing note we remark that the above-mentioned  construction gives a non-Paschian ordered plane in that case, too, when the axiom of determinateness is used without the axiom of choice to construct the set theory (or to construct any other part of mathematics).

\section*{Acknowledgments}

The author is indebted to the editor for his help to make precise and sharp the axiomatic embeddedness of the model. Many thanks are due to the reviewer his valuable suggestions. He showed me the non-trivial example of Remark 1.

\end{document}